\newcommand{\pdfgraphics}{\ifpdf\DeclareGraphicsExtensions{.pdf,.jpg}\else\fi}
\definecolor{refred}{rgb}{0.8,0,0}
\theoremstyle{plain}
\newtheorem{theorem}{Theorem}[section]
\newtheorem{lemma}[theorem]{Lemma}
\newtheorem{proposition}[theorem]{Proposition}
\newtheorem{corollary}[theorem]{Corollary}
\newtheorem{remark}[theorem]{Remark}
\newtheorem{definition}[theorem]{Definition}
\theoremstyle{definition}
\theoremstyle{remark}
\numberwithin{equation}{section}
\newcommand{\as}{{\mathcal A}}
\newcommand{\ET}{{\rm E}_{\rm T}}
\newcommand{\EB}{{\rm E}_{\rm B}}
\newcommand{\hs}{{\mathcal H}}
\newcommand{\ks}{{\mathcal K}}
\newcommand{\cs}{{\mathcal C}}
\newcommand{\gs}{{\mathcal G}}
\newcommand{\fs}{{F}}
\newcommand{\leb}{{\mathcal L}}
\newcommand{\ds}{{\mathcal D}}
\newcommand{\ms}{{\mathcal M}}
\newcommand{\ns}{{\mathcal N}}
\newcommand{\bs}{{\mathcal B}}
\newcommand{\vub}{{\mathcal V}}
\newcommand{\Ps}{{\mathcal P}}
\newcommand{\dis}{{\mathcal D}}
\newcommand{\Es}{{\mathcal E}}
\newcommand{\ts}{{\mathcal T}}
\newcommand{\rs}{{\mathcal R}}
\newcommand{\dst}{{\mathcal D}}
\newcommand{\qs}{{\mathcal Q}}
\newcommand{\Ws}{{\mathcal W}}
\newcommand{\vc}[1]{\bm{#1}}
\newcommand{\sob}[2]{{W}^{#1}({#2})}
\newcommand{\crs}{\mathcal{A}}
\newcommand{\loo}{{\Lambda[w]}}
\newcommand{\abs}[1]{\left\lvert#1\right\rvert}
\newcommand{\R}{{\mathbb R}}
\newcommand{\C}{{\mathbb C}}
\newcommand{\B}{{\mathbb B}}
\newcommand{\N}{{\mathbb N}}
\newcommand{\Z}{{\mathbb Z}}
\newcommand{\LL}{{\mathbb L}}
\newcommand{\MM}{{\mathbb M}}
\newcommand{\W}{{\mathbb W}}
\newcommand{\T}{{\mathbb S}^{\rm p}}
\newcommand{\G}{{\mathbb G}}
\newcommand{\Mat}{{\rm M}^{N\times N}}
\newcommand{\Mskew}{{\rm M}^{N\times N}_{\rm skew}}
\newcommand{\Msym}{{\rm M}^{N\times N}_{\rm sym}}
\newcommand{\Mdev}{{\rm M}^{N\times N}_{D}}
\newcommand{\Mcurl}{{\rm M}^{N\times N}}
\newcommand{\Mhigher}{{\rm M}^{N\times N\times N}_D}
\newcommand{\Mhigherbis}{{\rm M}^{N\times N\times N}}
\newcommand{\Ba}{B_1(0)}
\newcommand{\Bb}{\overline{B}_1(0)}
\newcommand{\Bbg}{\overline{B}_h(0)}
\newcommand{\tint}[1]{{\rm int}(#1)}
\newcommand{\epi}[1]{{\rm epi}(#1)}
\newcommand{\dom}[1]{{\mathcal Dom}(#1)}
\newcommand{\diam}[1]{{\mathcal Diam}(#1)}
\newcommand{\Om}{\Omega}
\newcommand{\Omb}{\overline{\Omega}}
\newcommand{\Omt}{\tilde{\Omega}}
\newcommand{\Omp}{\Omega'}
\newcommand{\Ombh}{\overline{\Omega_h}}
\newcommand{\bv}{BV(\Omega)}
\newcommand{\Cc}{C_c^1(\Omega)}
\newcommand{\deli}[2]{L^{1,2}(#1 \setminus #2)}
\newcommand{\LD}[2]{LD(#1 \setminus #2)}
\newcommand{\Ni}{\Gamma}
\newcommand{\norm}[1]{||#1||}
\newcommand{\nL}[1]{||\nabla #1||_a}
\newcommand{\nA}[1]{||E #1||_A}
\newcommand\aplim{\mathop{\rm ap\,lim}}
\newcommand{\weakst}{\stackrel{\ast}{\rightharpoonup}}
\newcommand{\weakstloc}{{\stackrel{\ast}{\rightharpoonup}}_{loc}}
\newcommand{\weak}{\rightharpoonup}
\newcommand{\wlystar}{$\text{weakly}^*$\;}
\newcommand{\wstar}{$\text{weak}^*$\;}
\def\wtostar{\stackrel{*}{\rightharpoonup}}
\newcommand{\hn}{\hs^{N-1}}
\newcommand{\hu}{\hs^1}
\newcommand{\esbd}[1]{\partial^* \!#1}
\newcommand{\per}[2]{{\rm P}(#1,#2)}
\newcommand{\rettificabili}{\mathcal{R}}
\newcommand{\E}{\mathbf E}
\def\Div{\textup{div}\,}
\newcommand{\Gb}{\mathbf G}
\newcommand{\ab}{\mathbf A}
\newcommand{\Ee}{\mathbf E^{\rm e}}
\newcommand{\dotEe}{\dot{\mathbf E}^{\rm e}}
\newcommand{\Ep}{\mathbf E^{\rm p}}
\newcommand{\dotEp}{\dot{\mathbf E}^{\rm p}}
\newcommand{\Hb}{\mathbf H}
\newcommand{\Wb}{\mathbf W}
\newcommand{\Tb}{\mathbf T}\newcommand{\Tbp}{{\mathbf T}^{\rm p}}
\newcommand{\Fb}{\mathbf F}
\newcommand{\qb}{\mathbf q}
\newcommand{\pb}{\mathbf p}
\newcommand{\eb}{\mathbf e}
\newcommand{\pscal}[2]{\langle #1, #2 \rangle}
\newcommand{\curl}[1]{{\rm curl } #1}
\newcommand{\K}{{\mathbb K}^{\rm p}}
\newcommand{\Ken}{{\mathbb K}^{\rm p}_{\rm en}}
\newcommand{\Kdiss}{{\mathbb K}^{\rm p}_{\mathcal Diss}}
\newcommand{\e}{\varepsilon}
\newcommand{\F}{F}
\newcommand{\n}{\textbf{\em n}}
\renewcommand\H{\mathcal{H}}
\newcommand{\tens}[1]{\mathsf{#1}}
\newcommand{\tpl}{\mathsf{w}}
\begin{document}
\pdfgraphics 

\title{Geometric invariants of non-smooth framed curves}

\author{\textsc{Giulia Bevilacqua} $^1$\thanks{\href{mailto:giulia.bevilacqua@dm.unipi.it}{\texttt{giulia.bevilacqua@dm.unipi.it}}}\,\,\,$-$\,\, \textsc{Luca Lussardi} $^2$\thanks{\href{mailto:luca.lussardi@polito.it}{
\texttt{luca.lussardi@polito.it}}} \,\,\,$-$\,\, \textsc{Alfredo Marzocchi} $^3$\thanks{\href{mailto:alfredo.marzocchi@unicatt.it}{
\texttt{alfredo.marzocchi@unicatt.it}}}\bigskip\\
\normalsize$^1$ Dipartimento di Matematica, Università di Pisa, Largo Bruno Pontecorvo 5, I–56127 Pisa, Italy.\\
\normalsize$^2$ DISMA ``G.L.\,Lagrange'', Politecnico di Torino, c.so Duca degli Abruzzi 24, I-10129 Torino, Italy.\\
\normalsize$^3$ Dipartimento di Matematica e Fisica ``N. Tartaglia", Università Cattolica del Sacro Cuore,\\
\normalsize via della Garzetta 48, I-25133 Brescia, Italy.
}

\maketitle

\begin{abstract}
\noindent We compare the Serret-Frenet frame with a {\em relatively parallel adapted frame} (RPAF) introduced by Bishop \cite{BISHOP} to parametrize $W^{2,2}$-curves. Next, we derive the geometric invariants, curvature and torsion, with the RPAF associated to the curve. Finally, we discuss applications of the two approaches in variational problems.
\end{abstract}

\bigskip

\textbf{Mathematics Subject Classification (2020)}: 53A04, 74B20.

\textbf{Keywords}: curvature, torsion, relatively parallel adapted frames.

\bigskip

\section{Introduction}
The study of curvature and torsion of spatial curves is contained in differential geometry where high regularity on the curve is customary. In such a setting, the curvature $\kappa$ and torsion $\tau$ generate the {\em Serret-Frenet frame}, which is an orthonormal basis along the curve, where $\tau$ is defined only in points with $\kappa \neq 0$. The Serret-Frenet system takes the form
$$
\left\{
\begin{aligned}
{\vc t}'&=\kappa\,\vc n,\\
{\vc n}'&=-\kappa\,\vc t-\tau\vc b,\\
{\vc b}'&=\tau\vc n,
\end{aligned}
\right.
$$
where $\vc t$ is tangent to the curve parametrized by the arc-length, $\vc n$ is the principal normal and $\vc b$ is the binormal.

Variational analysis of elastic curves has been widely developed by Langer and Singer \cite{langer1984knotted, langer1984total, langer1985curve}. We also mention a recent interesting research on elastic networks by Mantegazza et al. \cite{MPP}, where the authors consider essentially the Euler elasticae.
Nevertheless, to extend the study to more general energy functionals, one may need to introduce a suitable weak notion of curvature and torsion defined everywhere.

More than 40 years ago, Bishop \cite{BISHOP} introduced another way to frame a curve. Precisely, he defined the {\em relatively parallel adapted frame} (RPAF) as an orthonormal basis along the curve such that normal vectors to the curve have tangential derivatives. This requires less regularity than in the classical case, for instance it can be set for a $W^{2,2}$-curve. Precisely, a RPAF is given by
$$
\left\{
\begin{aligned}
{\vc t}'&=u_2\,\vc d_1-u_1\vc d_2,\\
{\vc d}'_1&=-u_2\, \vc t,\\
{\vc d}'_2&=u_1 \,\vc t,
\end{aligned}
\right.
$$
where $\vc t$ is tangent to the curve and $(\vc d_1, \vc d_2)$ are orthogonal to $\vc t$. We notice that the coefficients $u_1,u_2$ are not geometric invariants of the curve.

Another way to set the problem is the {\em framed curve approach}, which has been reconsidered by Schuricht et al. \cite{GMSM, S} to study elastic rods. We also mention \cite{GF}, where the RPAF has been obtained within the theory of Cosserat rods.
Framed curves have been recently used to study variational problems related to elastic curves. We refer to \cite{FHMP, FHMP2}, where they derived a corrected version of the Sadowsky functional within the theory of elastic ribbons and to \cite{GLF, BLM1, BLM2, BLM3}, where the authors investigated equilibrium shapes of a system in which a closed flexible filament is spanned by a liquid film. Other results can be found in \cite{BLMPRSA, BBLM}, where the framed curve approach has been employed to derive first-order necessary conditions for minimizers.

In this paper, in Section \ref{sec:RPAF}, we recall the Bishop framework introducing the relatively parallel adapted frames (RPAF) and we compare them with the Serret-Frenet frames in the smooth case. Next, in Section \ref{sec:invariant}, we derive the geometric invariants $\kappa$ and $\tau$ as functions of the coefficients $u_1, u_2$ of the RPAF. Finally, in Section \ref{sec:conclusion}, we conclude that the framed curve approach seems to be useful to study variational problems involving energy functionals, while the RPAF approach is less suitable for that. However, the framed curve approach requires more than a $W^{2,2}$-curve, whereas in the RPAF one, curvature and torsion are well-defined for any $W^{2,2}$-curve.

\section{Relatively adapted frames along a curve}
\label{sec:RPAF}
Let us consider a curve $\vc x \in \cs^2((0, L);\R^3)$, where $L>0$, parametrized by the arc-length $s$. Following Bishop \cite{BISHOP}, we start recalling the notion of relatively parallel fields along $\vc x$. Let $\vc t = \vc x'$ be the unit tangent vector to $\vc x$ and let $\vc d$ in the plane perpendicular to $\vc t$.  We say that $\vc d$ is {\em relatively parallel field along $\vc x$} if $\vc d'=c\vc t$ for some constant $c$ (see Figure \ref{fig1}).


\begin{figure}[ht]
\centering
\includegraphics[width=11cm]{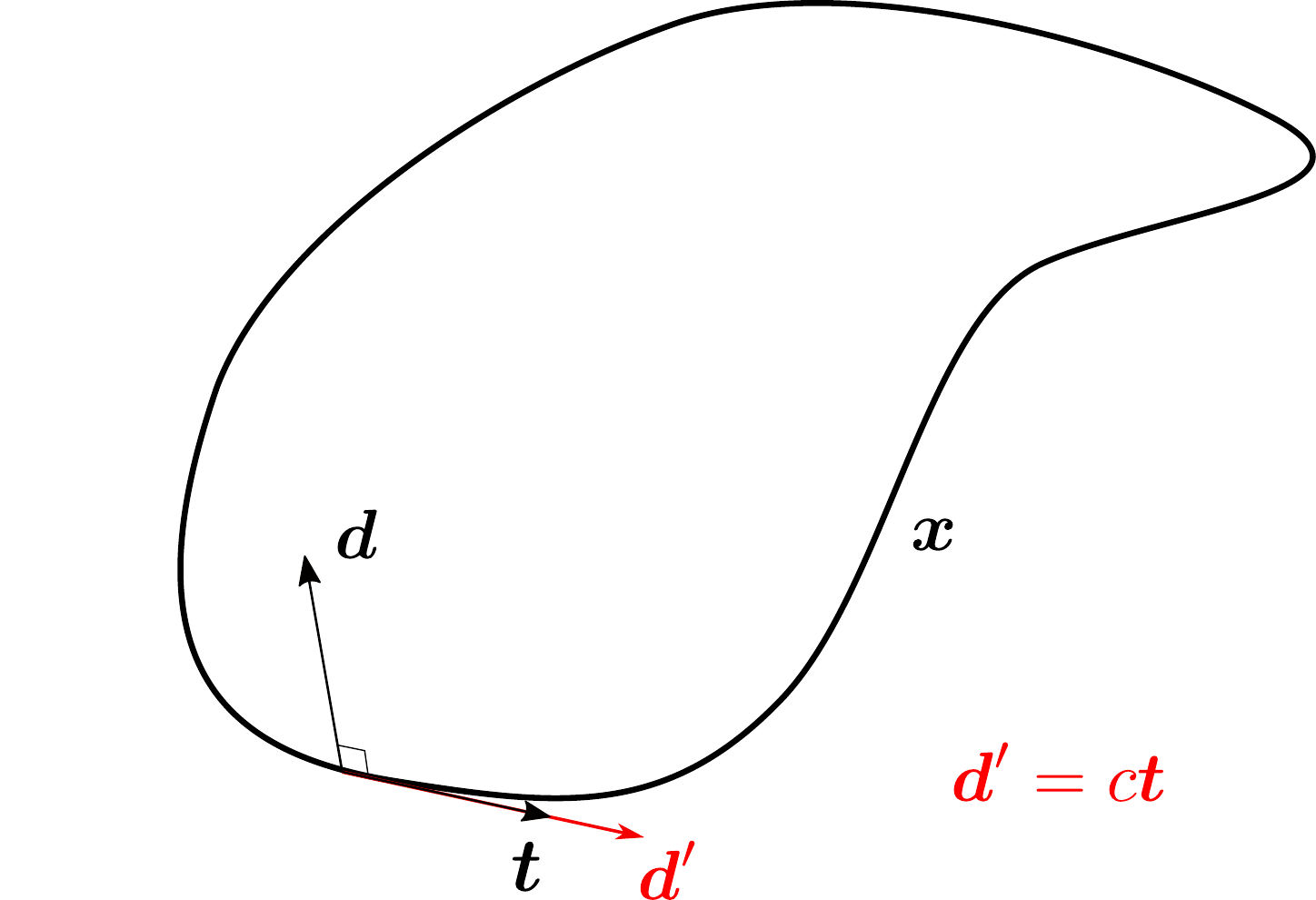}
\caption{The field $\vc d$ is {\em relatively parallel} along the curve $\vc x$: $\vc d' = c \vc t$ where $c$ is a constant.}\label{fig1}
\end{figure}
\begin{remark}
    \label{existence_relatively_parallel}
By means of a standard parallel transport argument, see \cite[Thm.\,1]{BISHOP} for details, we can ensure that there exists at least one relatively parallel field along $\vc x$.
\end{remark}
A {\it relatively parallel adapted frame along $\vc x$}, briefly {\it RPAF along $\vc x$}, is an orthonormal frame $\{\vc t,\vc d_1,\vc d_2\}$ along the curve such that both $\vc d_1,\vc d_2$ are relatively parallel fields along $\vc x$. As a consequence, there are smooth coefficients $u_i$, $i=1,2$, such that
\begin{equation}
    \label{eq:curva_antman}
\left\{
\begin{aligned}
{\vc t}'&=u_2\,\vc d_1-u_1\vc d_2,\\
{\vc d}'_1&=-u_2\, \vc t,\\
{\vc d}'_2&=u_1 \,\vc t.
\end{aligned}
\right.
\end{equation}
Clearly, any orthonormal frame satisfying \eqref{eq:curva_antman} is a RPAF along any curve with $\vc t$ as tangent vector.

We notice that the system \eqref{eq:curva_antman} is similar to the classical Serret-Frenet system which can be defined for a $\cs^3$-curve. Precisely, if $\vc x\in \cs^3((0,L);\R^3)$ is parametrized by the arc-length, then $\vc t=\vc x'$. Assume $|\vc t'|\ne 0$ everywhere and let $\kappa = \abs{\vc t'} $ the {\em (positive) curvature} of $\vc x$. We set
\[
\vc n=\frac{\vc t'}{|\vc t'|}, \qquad \vc b=\vc t \times \vc n,
\]
usually referred as the {\it principal normal} and the {\it binormal} respectively. Directly, we have
$$
\vc b' = \tau \vc n,
$$
for some coefficient $\tau$ which is called the {\em torsion} of the curve $\vc x$.
Hence, the Serret-Frenet orthonormal frame $\{\vc t, \vc n, \vc b\}$ satisfies the system 
\begin{equation}
    \label{eq:def_SF}
\left\{
\begin{aligned}
{\vc t}'&=\kappa\,\vc n,\\
{\vc n}'&=-\kappa\,\vc t-\tau\vc b,\\
{\vc b}'&=\tau\vc n.
\end{aligned}
\right.
\end{equation}
We notice that system \eqref{eq:def_SF} is well-defined since $|\vc t'| \ne 0$. 
The moving orthonormal frame $\{\vc t, \vc n, \vc b\}$ is not an RPAF along $\vc x$ since both $\vc n$ and $\vc b$ are not relatively parallel along $\vc x$. Nevertheless, the Serret-Frenet system contains important information about $\vc x$. Indeed, in \eqref{eq:def_SF} the coefficients $\kappa$ and $\tau$ are {\it geometric invariants} of $\vc x$: the curvature and the torsion of $\vc x$ do not depend on the parametrization of $\vc x$.

Both RPAF and Serret-Frenet frames are special cases of the general frame along the curve, which looks like
\begin{equation}
    \label{eq:sistema_tot}
\left\{
\begin{aligned}
{\vc t}'&=u_2\,\vc d_1-u_1\vc d_2,\\
{\vc d}'_1&=-u_2\, \vc t + u_3 \vc d_2,\\
{\vc d}'_2&=u_1 \,\vc t - u_3 \vc d_1,
\end{aligned}
\right.
\end{equation}
where $u_1, u_2$ and $u_3$ are smooth coefficients.

To deal with non-smooth curves, the idea is to prescribe the coefficients $u_i\in L^2(0,L)$, ($u_1, u_2$ are called {\it flexural densities}, while $u_3$ is the {\it twist density}), and look for a solution of \eqref{eq:sistema_tot}. More precisely, let us fix the following initial conditions
\begin{equation}\label{eq:clamping}
\vc t(0) = \vc t^0, \quad \vc d_1(0) = \vc d_1^0, \quad \vc d_2(0) = \vc d_2^0,
\end{equation}
such that $\{{\vc t}^0,{\vc d_1}^0,{\vc d_2}^0\}$ is an orthonormal basis in $\R^3$. By classical results  \cite{hartman}, there exists a unique orthonormal frame $\{\vc t, \vc d_1,\vc d_2\} \in (W^{1,2}((0,L);\R^3))^3$ satisfying \eqref{eq:sistema_tot} and \eqref{eq:clamping}. By integration we can therefore reconstruct the curve $\vc x$ as
\[
\vc x(s)=\vc t^0+\int_0^s\vc t(r)\,dr.
\]
In particular, we get $\vc x\in W^{2,2}((0,L);\R^3)$. 
This approach has been introduced and developed by Gonzalez et al.\,(see \cite{GMSM} and \cite{S}) and it has been defined the {\em framed curve approach}. In the case where $u_3 = 0$, we call again $\{\vc t, \vc d_1,\vc d_2\}$ a RPAF along the curve $\vc x$.

From now on, we will focus only on systems \eqref{eq:curva_antman} and \eqref{eq:def_SF} and we compare them. A natural question arises: are $\kappa$ and $\tau$ related with the coefficients $u_1,u_2$ of an RPAF along $\vc x$? In the next Lemma we show the relation between $u_i$'s and $\kappa,\tau$.

\begin{lemma}
\label{lemma:sF_to_RPAF}
Let $u_i\colon [0,L] \to \R$, $i=1,2$, be such that the unique solution $\vc x$ of \eqref{eq:curva_antman} and \eqref{eq:clamping} is of class $\cs^3((0,L);\R^3)$. Assume that $|\vc t'|=|\vc x''|\ne 0$ everywhere. Let $\kappa,\tau$ be the curvature and the torsion of $\vc x$. There exists $\vartheta \in \cs^1([0,L])$ such that 
\begin{equation}
\label{relation}
\left \{
\begin{aligned}
{\vartheta}' &=\tau \\
u_1 &= \kappa\sin \vartheta\\
u_2 &= \kappa \cos \vartheta.
\end{aligned}
\right.
\end{equation}
In particular, 
\[
\sqrt{u_1^2+u_2^2}=\kappa.
\]
\end{lemma}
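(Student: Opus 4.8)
The plan is to exploit the fact that the two frames $\{\vc t,\vc d_1,\vc d_2\}$ and $\{\vc t,\vc n,\vc b\}$ share the same tangent vector $\vc t$, so that $(\vc d_1,\vc d_2)$ and $(\vc n,\vc b)$ are two orthonormal bases of the plane $\vc t(s)^\perp$. Consequently they differ, at each $s$, by a rotation of some angle $\vartheta(s)$, and the whole statement reduces to identifying this angle, showing it is $\cs^1$, and computing its derivative. Concretely, I expect to write
\begin{equation*}
\vc n = \cos\vartheta\,\vc d_1 - \sin\vartheta\,\vc d_2, \qquad \vc b = \sin\vartheta\,\vc d_1 + \cos\vartheta\,\vc d_2,
\end{equation*}
the second relation being forced by the first together with $\vc b=\vc t\times\vc n$ and $\vc t\times\vc d_1=\vc d_2$ (here I assume the initial frame in \eqref{eq:clamping} is positively oriented, which propagates by continuity).

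Before defining $\vartheta$ I would first upgrade the regularity of the RPAF frame. Since $\vc x\in\cs^3$ one has $\vc t'=\vc x''\in\cs^1$, and from \eqref{eq:curva_antman} with orthonormality $u_2=\vc t'\cdot\vc d_1$ and $u_1=-\vc t'\cdot\vc d_2$; as $\vc d_1,\vc d_2\in W^{1,2}\hookrightarrow \cs^0$ this already gives $u_1,u_2\in\cs^0$, whence the last two equations in \eqref{eq:curva_antman} yield $\vc d_1',\vc d_2'\in\cs^0$ and therefore $\vc d_1,\vc d_2\in\cs^1$. With both frames of class $\cs^1$, the functions $\cos\vartheta:=\vc n\cdot\vc d_1$ and $\sin\vartheta:=-\vc n\cdot\vc d_2$ are $\cs^1$ and lie on $S^1$, so by the standard lifting of an $S^1$-valued $\cs^1$ curve to its universal cover $\R$ they determine a single-valued $\vartheta\in\cs^1$.

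Once $\vartheta$ is in hand, the two computations are short. Comparing $\vc t'=\kappa\vc n$ from \eqref{eq:def_SF} with $\vc t'=u_2\vc d_1-u_1\vc d_2$ from \eqref{eq:curva_antman} and substituting the expression for $\vc n$ yields $u_2=\kappa\cos\vartheta$ and $u_1=\kappa\sin\vartheta$, from which $\sqrt{u_1^2+u_2^2}=\kappa$ is immediate. To obtain $\vartheta'=\tau$ I differentiate $\vc n=\cos\vartheta\,\vc d_1-\sin\vartheta\,\vc d_2$, insert $\vc d_1'=-u_2\vc t$ and $\vc d_2'=u_1\vc t$, and collect terms: the tangential part reduces to $-(u_2\cos\vartheta+u_1\sin\vartheta)\vc t=-\kappa\vc t$, while the remaining part equals $-\vartheta'(\sin\vartheta\,\vc d_1+\cos\vartheta\,\vc d_2)=-\vartheta'\vc b$, so that $\vc n'=-\kappa\vc t-\vartheta'\vc b$; matching with $\vc n'=-\kappa\vc t-\tau\vc b$ forces $\vartheta'=\tau$.

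The main obstacle is not the algebra of the last paragraph but the construction of $\vartheta$ as a genuine single-valued $\cs^1$ function: one must first secure enough regularity of the Bishop frame (the bootstrap above) and then carry out the angle-lifting carefully, since a priori the angle is only defined modulo $2\pi$. The orientation bookkeeping—ensuring the two frames are coherently oriented, which is exactly what fixes the signs in \eqref{relation}—is the other point that needs attention.
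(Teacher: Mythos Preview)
Your proposal is correct and follows essentially the same strategy as the paper: identify the angle $\vartheta$ relating the two frames in the normal plane, read off $u_1=\kappa\sin\vartheta$, $u_2=\kappa\cos\vartheta$ from $\vc t'=\kappa\vc n=u_2\vc d_1-u_1\vc d_2$, and then differentiate to extract $\vartheta'=\tau$.

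The execution differs in two minor but favorable ways. First, you are more explicit about the regularity bootstrap and the $S^1$-lifting needed to obtain a genuine $\vartheta\in\cs^1$; the paper simply asserts existence of such a $\vartheta$. Second, for the torsion you differentiate $\vc n=\cos\vartheta\,\vc d_1-\sin\vartheta\,\vc d_2$ directly and match against $\vc n'=-\kappa\vc t-\tau\vc b$, which yields $\vartheta'=\tau$ on the nose once the orientation is fixed. The paper instead differentiates the relation $\kappa\vc n=u_2\vc d_1-u_1\vc d_2$ (so $\kappa'$ enters), arrives at $\kappa\tau\vc b=\vartheta'(u_1\vc d_1+u_2\vc d_2)$, and squares to conclude $\tau=\pm\vartheta'$; your route avoids that residual sign ambiguity by handling the orientation explicitly.
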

\begin{proof}
Since the curve is regular, both systems
\begin{equation}
    \label{eq:curva_antman_theorema}
\left\{
\begin{aligned}
{\vc t}'&=u_2\,\vc d_1-u_1\vc d_2\\
{\vc d}'_1&=-u_2\, \vc t\\
{\vc d}'_2&=u_1 \,\vc t\\
\end{aligned}
\right.
\end{equation}
and 
\begin{equation}
    \label{eq:Serret-Frenet_teorema}
\left\{
\begin{aligned}
{\vc t}'&=\kappa\,\vc n\\
{\vc n}'&=-\kappa\,\vc t-\tau\vc b\\
{\vc b}'&=\tau\vc n.
\end{aligned}
\right.\, 
\end{equation}
hold.

Comparing \eqref{eq:Serret-Frenet_teorema}${}_1$ and \eqref{eq:curva_antman_theorema}${}_1$, it follows
\begin{equation}
    \label{eq:SF-RPAF}
        \kappa\vc n = \vc t' =u_2\,\vc d_1-u_1\vc d_2,
\end{equation}
whence, remembering that $\vc d_1$ and $\vc d_2$ are orthonormal, 
\begin{equation}
    \label{eq:def_u1u2}
    \begin{aligned}
    &\kappa^2=u_1^2+u_2^2.
\end{aligned}
\end{equation}
This suggests to set
\begin{equation}
    \label{eq:u1_u2_theta}
    \begin{aligned}
        &u_1=\kappa\sin\vartheta,&&&u_2=\kappa\cos\vartheta,
    \end{aligned}
\end{equation}
for a suitable function $\vartheta \in \cs^1((0,L))$.

To relate $\vartheta$ to the torsion $\tau$, we notice that since $\kappa \neq 0$ and $\vc x \in \cs^3((0,L);\R^3)$, we get $\kappa \in \cs^1((0,L))$. Hence, differentiating \eqref{eq:u1_u2_theta} and using \eqref{eq:SF-RPAF}, we obtain
\begin{align}
\label{eq:deriv-u_1}
&u_1'=\kappa'\sin\vartheta+\kappa\vartheta'\cos\vartheta=\frac{\kappa'}{\kappa}u_1 + u_2\vartheta'\\
\label{eq:deriv-u_2}
    &u_2'=\kappa'\cos\vartheta-\kappa\vartheta'\sin\vartheta=\frac{\kappa'}{\kappa}u_2 - u_1\vartheta'.
\end{align}
    
On the other hand, differentiating \eqref{eq:SF-RPAF}, it follows
$$
\kappa'\vc n+\kappa{\vc n}'=u_2'\vc d_1+u_2{\vc d_1}'-u_1'\vc d_2-u_1{\vc d_2}'.
$$
Substituting the expressions of ${\vc n}',{\vc d_1}', {\vc d_2}'$ from \eqref{eq:curva_antman_theorema} and \eqref{eq:Serret-Frenet_teorema} respectively, and using \eqref{eq:SF-RPAF}, \eqref{eq:def_u1u2}, \eqref{eq:deriv-u_1} and \eqref{eq:deriv-u_2}, one  easily gets
$$
\kappa'\vc n-\kappa\tau\vc b=-\vartheta'(u_1\vc d_1+u_2\vc d_2)+\frac{\kappa'}{\kappa}\left(u_2\vc d_1-u_1\vc d_2\right) = -\vartheta'(u_1\vc d_1+u_2\vc d_2) + \frac{\kappa'}{\kappa} \kappa \vc n.
$$
This simplifies into
$$
\kappa\tau\vc b=\vartheta'(u_1\vc d_1+u_2\vc d_2).
$$
Squaring this relation and using \eqref{eq:def_u1u2}, it follows immediately $\tau=\pm\vartheta'$, which is the thesis.
\end{proof}

\begin{remark}
\label{remark:rotazione}
    Equation \eqref{eq:SF-RPAF}, together with \eqref{eq:u1_u2_theta}, implies for $\kappa\neq0$
    $$\vc n=\cos\vartheta\vc d_1-\sin\vartheta\vc d_2$$
    and since $\vc b$ is perpendicular to $\vc n$, it is immediate to check that
    $$\vc b=\sin\vartheta\vc d_1+\cos\vartheta\vc d_2.$$
    Therefore, $\vartheta(s)$ is, for every $s$, the angle of a rotation $\tens R(s)$ in the plane perpendicular to $\vc t(s)$ at $\vc x(s)$, which can be thought as a space rotation around $\vc t(s)$. i.e. leaving the tangent vector fixed: $\tens R(s)\vc t(s)=\vc t(s)$.
\end{remark}

\begin{remark}
Lemma \ref{lemma:sF_to_RPAF} shows that, assuming smoothness of the curve, the curvature and the torsion are related to the flexural densities $u_1$ and $u_2$ through the twist $\vartheta$ of the moving frame. Nevertheless, from the system \eqref{relation}, it is clear that $u_1$ and $u_2$ are not geometric invariants of the curve.
\end{remark}



In order to extract geometric invariants of a curve of class $W^{2,2}$ from a RPAF along it the we need to understand better the ``degrees of freedom'' of the RPAF. The next proposition is only stated in \cite{BISHOP}. 

\begin{figure}[h!]
\centering
\includegraphics[width=11cm]{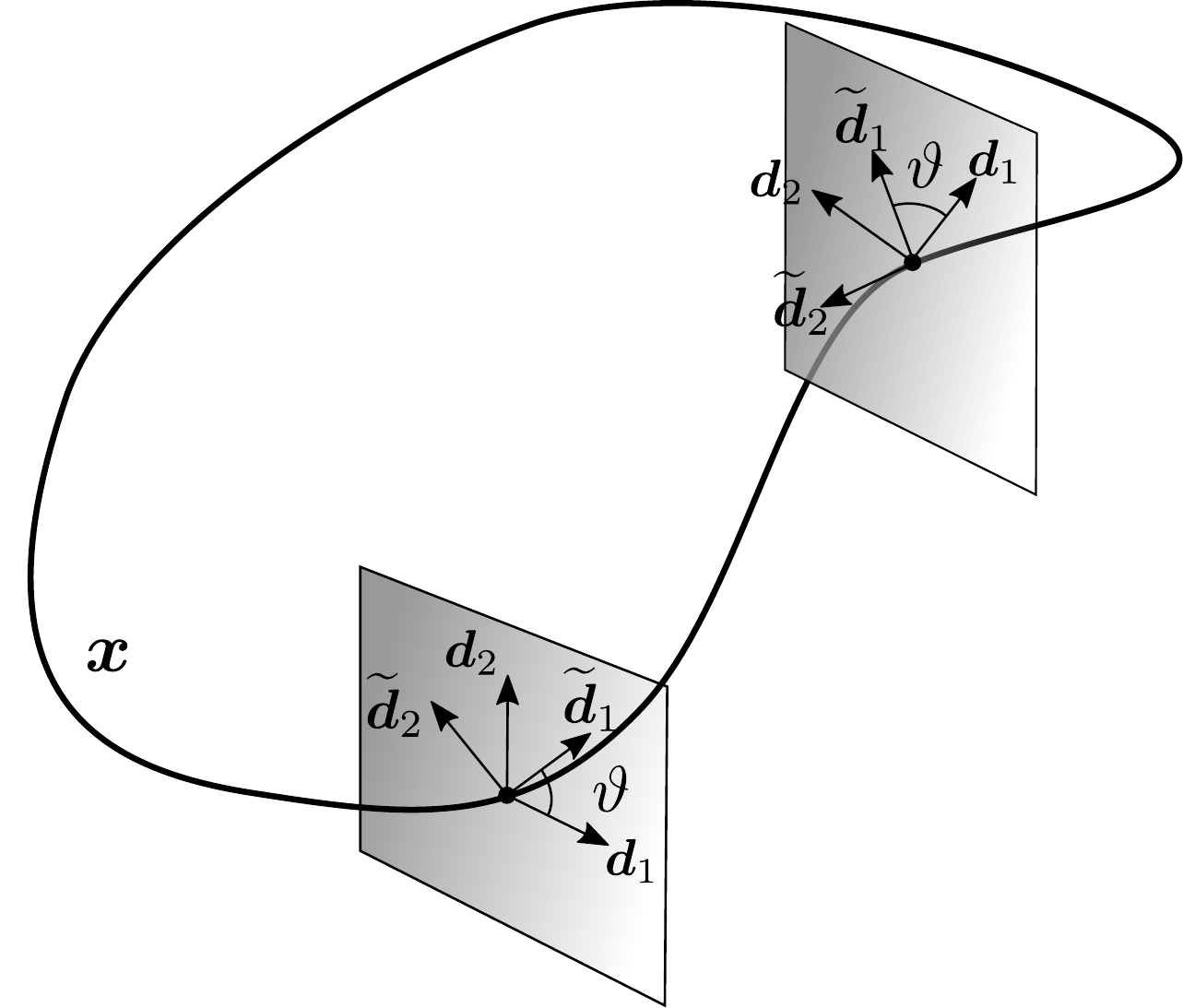}
\caption{Two RPAFs along $\vc x$: they differe by a constant angle of rotation $\vartheta$.}\label{fig3}
\end{figure}
 
\begin{proposition}
\label{prop:esistenzaRPAF}
If $\{\vc t, \vc d_1,\vc d_2\}$ is a RPAF, then the totality of RPAFs consists of frames of the form 
\[
\tens{R}(\vc t,\vc d_1, \vc d_2)
\]
where $\tens{R}$ is the rotation introduced in Remark \ref{remark:rotazione} and it is independent of $s$, see Figure \ref{fig3}.
\end{proposition}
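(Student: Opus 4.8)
The plan is to characterize all RPAFs along a fixed curve $\vc x$ by comparing an arbitrary RPAF $\{\vc t, \tilde{\vc d}_1, \tilde{\vc d}_2\}$ with the given one $\{\vc t, \vc d_1, \vc d_2\}$. Both share the same tangent $\vc t = \vc x'$, so at each $s$ the pairs $(\tilde{\vc d}_1, \tilde{\vc d}_2)$ and $(\vc d_1, \vc d_2)$ are two orthonormal bases of the same plane $\vc t(s)^\perp$. Hence there is a rotation $\tens R(s)$ of that plane, i.e.\ an angle $\vartheta(s)$, relating them; concretely I would write
\begin{equation}
\label{eq:prop-ansatz}
\tilde{\vc d}_1 = \cos\vartheta\,\vc d_1 - \sin\vartheta\,\vc d_2, \qquad \tilde{\vc d}_2 = \sin\vartheta\,\vc d_1 + \cos\vartheta\,\vc d_2,
\end{equation}
for some scalar function $\vartheta$ (its regularity inherited from the two frames). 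The entire content of the Proposition is then that the relative-parallelism requirement forces $\vartheta' \equiv 0$, so that $\tens R$, and therefore $\vartheta$, is independent of $s$.

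The key computation is to impose that $\tilde{\vc d}_1$ and $\tilde{\vc d}_2$ are themselves relatively parallel, i.e.\ that their derivatives are parallel to $\vc t$. Differentiating the first relation in \eqref{eq:prop-ansatz} and substituting ${\vc d}'_1 = -u_2\vc t$ and ${\vc d}'_2 = u_1\vc t$ from the RPAF system \eqref{eq:curva_antman}, I would get
\begin{equation}
\label{eq:prop-deriv}
\tilde{\vc d}'_1 = -\vartheta'\bigl(\sin\vartheta\,\vc d_1 + \cos\vartheta\,\vc d_2\bigr) + \bigl(-u_2\cos\vartheta - u_1\sin\vartheta\bigr)\vc t.
\end{equation}
The term multiplying $\vc t$ is harmless, but the bracketed vector $\sin\vartheta\,\vc d_1 + \cos\vartheta\,\vc d_2$ is a unit vector lying in the plane $\vc t^\perp$, hence orthogonal to $\vc t$. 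For $\tilde{\vc d}'_1$ to be a scalar multiple of $\vc t$ — the defining property of a relatively parallel field — this normal component must vanish, which forces $\vartheta' = 0$ identically. The same conclusion follows symmetrically from differentiating $\tilde{\vc d}_2$, giving a consistency check rather than new information.

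From $\vartheta' \equiv 0$ I conclude that $\vartheta$ is a constant, so $\tens R$ does not depend on $s$; conversely, any constant rotation in the normal plane applied to a RPAF trivially preserves orthonormality and the relative-parallelism condition (since a constant angle contributes no normal derivative in \eqref{eq:prop-deriv}), so every such $\tens R(\vc t, \vc d_1, \vc d_2)$ is again a RPAF. This establishes the claimed description of the totality of RPAFs. The main obstacle I anticipate is not the algebra but the justification that the two normal frames differ by a genuine rotation depending measurably/smoothly on $s$: one must argue that $\vartheta(s)$ can be chosen as a well-defined function of the required regularity (continuous, and differentiable where the frames are), rather than jumping by multiples of $2\pi$; this is a standard lifting argument, but it is the step that deserves care, especially if one later wants the statement in the lower-regularity $W^{2,2}$ setting where the frames are only $W^{1,2}$.
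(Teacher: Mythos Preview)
Your proof is correct and follows essentially the same approach as the paper: write the second RPAF as a rotation by an angle $\vartheta(s)$ of the first in the normal plane, differentiate, substitute the RPAF relations $\vc d_1'=-u_2\vc t$, $\vc d_2'=u_1\vc t$, and observe that the normal component of $\tilde{\vc d}_1'$ is $-\vartheta'(\sin\vartheta\,\vc d_1+\cos\vartheta\,\vc d_2)$, forcing $\vartheta'=0$. Your explicit mention of the converse direction and of the lifting issue for $\vartheta$ is slightly more careful than the paper, which glosses over both points.
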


\begin{proof}
Obviously, if $(\vc t,\vc d_1, \vc d_2)$ is an RPAF and $\tens{R}$ does not depend on $s$, then $\tens{R}(\vc t,\vc d_1, \vc d_2)$ is an RPAF.

Let $\{\vc t, \vc d_1, \vc d_2\}$ and $\{\vc t, \tilde{\vc d}_1, \tilde{\vc d}_2\}$ be two RPAFs defined on the curve $\vc x$. Then, necessarily in the plane perpendicular to $\vc t$, we have $(\vc d_1, \vc d_2)$ are related to $(\tilde{\vc d}_1, \tilde{\vc d}_2)$ through a rotation which depends on the applied point $s$, namely
\begin{equation}
\label{eq:legame_d_tilded}
    \begin{pmatrix}
\tilde{\vc d}_1 \\
\tilde{\vc d}_2
\end{pmatrix} = \tens{R} \begin{pmatrix}
{\vc d}_1 \\
{\vc d}_2 
\end{pmatrix} 
= \begin{pmatrix}
\cos \vartheta & -\sin\vartheta\\
\sin\vartheta & \cos\vartheta
\end{pmatrix} \, \begin{pmatrix}
{\vc d}_1 \\
{\vc d}_2
\end{pmatrix},
\end{equation}
where $\tens{R}$ is an rotation matrix which in general depends on $s$. We want to prove that actually $\tens R$ does not depend on $s$. Using the expression of $(\tilde{\vc d}_1,\tilde{\vc d}_2)$ in Eq. \eqref{eq:legame_d_tilded}, we get
$$
\begin{aligned}
&\tilde{\vc d}_1' = - \sin \vartheta \vartheta'(s) \vc d_1 + \cos\vartheta \vc d'_1 - \vartheta' \cos\vartheta \vc d_2(s) - \sin \vartheta \vc d_2',\\
&\tilde{\vc d}_2' = \cos \vartheta \vartheta' \vc d_1 + \sin\vartheta \vc d'_1 - \vartheta' \sin\vartheta \vc d_2(s) + \cos \vartheta\vc d_2'.
\end{aligned}
$$
Since the couple $(\vc d_1, \vc d_2)$ satisfies Eq. \eqref{eq:curva_antman}, the above expressions simplify into
$$
\begin{aligned}
&\tilde{\vc d}_1' = \underbrace{\vartheta'\left[-\sin \vartheta \vc d_1 -\cos\vartheta \vc d_2\right]}_{\mathcal{A}_1} + \cos\vartheta u_2 \vc t - \sin \vartheta u_1 \vc t,\\
&\tilde{\vc d}_2' = \underbrace{\vartheta'\left[\cos \vartheta \vc d_1 -\sin \vartheta \vc d_2\right]}_{\mathcal{A}_2} - \sin\vartheta u_2 \vc t + \cos \vartheta u_1 \vc t.
\end{aligned}
$$
Since both $\{\vc t,\vc d_1, \vc d_2\}$ and $\{\vc t, \tilde{\vc d}_1, \tilde{\vc d}_2\}$ were chosen to be RPAFs, i.e. the components of the derivatives of $\tilde{\vc d}_1'$ and $\tilde{\vc d}_2'$ are allowed only along the tangential direction $\vc t$, this implies that both $\mathcal{A}_1$ and $\mathcal{A}_2$ have to be zero. Hence, $\vartheta' = 0$ which gives the thesis.
\end{proof}

\begin{figure}[h!]
\centering
\includegraphics[width=11cm]{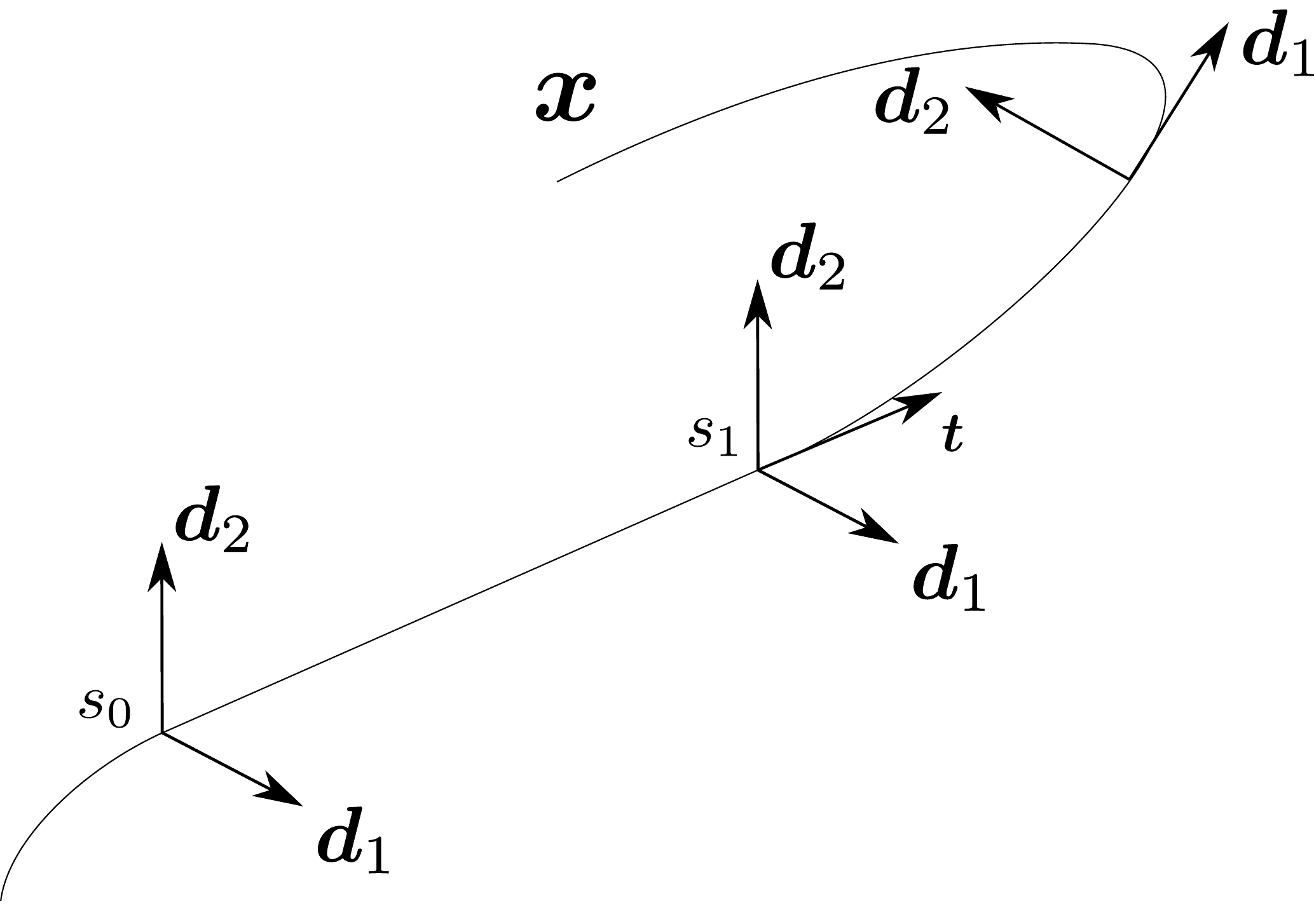}
\caption{Along a straight-line segment the twist of a RPAF is not arbitrary.}\label{fig2}
\end{figure}

\begin{remark}
    The definition of a RPAF and the Proposition \ref{prop:esistenzaRPAF} can be easily adapted to a curve in $\R^n$ for any $n >3$.
\end{remark}

An immediate consequence of Proposition \ref{prop:esistenzaRPAF} is the following remark.
\begin{remark}
    If $\vc x$ has a straight-line piece then a RPAF along such a piece of the curve must be constant (see Figure \ref{fig2}).
\end{remark}

\section{From the unit tangent field to geometric invariants}
\label{sec:invariant}
In this section we prescribe $\vc t\in W^{1,2}((0,L);\R^3)$ with $|\vc t|=1$ everywhere, so that, by integration, we obtain a curve $\vc x$, parametrized by the arc-length. We want to define directly geometric invariants of $\vc x$.

We begin with the following proposition. 

\begin{proposition}
    \label{th:integral}
Let $\vc d_1^0,\vc d_2^0 \in \R^3$ be such that $\{\vc t(0),\vc d_1^0,\vc d_2^0\}$ is an orthonormal basis in $\R^3$. Then the integral equations
\begin{align}
\label{eq:k1}
u_1(s) &= -\vc d_2^0 \cdot \vc t'(s) - \int_0^s u_1(r) \vc t(r ) \cdot\vc t'(s) \, dr,\\
\label{eq:k2}
u_2(s)&= \vc d_1^0 \cdot \vc t'(s) - \int_0^s u_2(r) \vc t(r ) \cdot\vc t'(s) \, dr.
\end{align}
have a unique solution $u_1,u_2 \in L^2(0,L)$. Moreover, if we set 
\begin{equation}\label{ds}
\vc d_1(s)=\vc d_1^0-\int_0^su_2(r)\vc t(r)\,dr, \quad \vc d_2(s)=\vc d_2^0+\int_0^su_1(r)\vc t(r)\,dr
\end{equation}
then $\{\vc t,\vc d_1,\vc d_2\}$ is an orthonormal basis in $\R^3$ and it is the unique solution of \eqref{eq:curva_antman}-\eqref{eq:clamping}.
\end{proposition}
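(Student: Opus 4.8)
The plan is to recognize the coupled integral equations \eqref{eq:k1}--\eqref{eq:k2} as a disguised linear Carathéodory ODE for the normal fields $\vc d_1,\vc d_2$, for which classical theory applies. First I would observe the structural identity behind the construction: if $\vc d_1,\vc d_2$ are defined by \eqref{ds}, then the right-hand sides of \eqref{eq:k2} and \eqref{eq:k1} are exactly $\vc t'(s)\cdot\vc d_1(s)$ and $-\vc t'(s)\cdot\vc d_2(s)$, because $\int_0^s u_2(r)\vc t(r)\,dr$ and $\int_0^s u_1(r)\vc t(r)\,dr$ are precisely the integral terms appearing there. Hence solving \eqref{eq:k1}--\eqref{eq:k2} for $u_1,u_2\in L^2(0,L)$ is equivalent to finding $\vc d_1,\vc d_2$ as in \eqref{ds} with $u_2=\vc t'\cdot\vc d_1$ and $u_1=-\vc t'\cdot\vc d_2$, i.e. to solving the two linear initial value problems for the single equation $\vc v'=-(\vc t'\cdot\vc v)\,\vc t$, once with $\vc v(0)=\vc d_1^0$ and once with $\vc v(0)=\vc d_2^0$.

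The existence and uniqueness step is then a direct application of the classical Carathéodory theory cited in \cite{hartman}. The coefficient of the linear system, the map $\vc v\mapsto -(\vc t'(s)\cdot\vc v)\,\vc t(s)$, has operator norm bounded by $|\vc t(s)|\,|\vc t'(s)|=|\vc t'(s)|\in L^1(0,L)$ since $|\vc t|\equiv 1$; linearity together with this $L^1$ bound yields a unique absolutely continuous solution $\vc d_1,\vc d_2$ on all of $[0,L]$, with no finite-time blow-up by a Gronwall estimate. I would then set $u_2=\vc t'\cdot\vc d_1$ and $u_1=-\vc t'\cdot\vc d_2$, which lie in $L^2(0,L)$ as products of the bounded continuous fields $\vc d_i$ with $\vc t'\in L^2$; by the equivalence above they solve \eqref{eq:k1}--\eqref{eq:k2}. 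Any other $L^2$ solution would generate, via \eqref{ds}, a solution of the same ODE, so uniqueness for $u_1,u_2$ follows from uniqueness for the ODE. Note that $\vc d_1'=-u_2\vc t$ and $\vc d_2'=u_1\vc t$ are in $L^2$, so in fact $\vc d_i\in W^{1,2}$, matching the framework.

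To prove that $\{\vc t,\vc d_1,\vc d_2\}$ is orthonormal I would run a Gram-matrix argument, exploiting that $u_1,u_2$ were chosen precisely to annihilate the tangential components. Differentiating and using \eqref{ds} and $|\vc t|\equiv 1$ gives $(\vc t\cdot\vc d_1)'=\vc t'\cdot\vc d_1-u_2$ and $(\vc t\cdot\vc d_2)'=\vc t'\cdot\vc d_2+u_1$, both identically zero by the definitions of $u_2$ and $u_1$; since $\vc t(0)\cdot\vc d_i^0=0$ we conclude $\vc t\cdot\vc d_1\equiv\vc t\cdot\vc d_2\equiv 0$. Feeding this back, $(\vc d_1\cdot\vc d_1)'=-2u_2(\vc t\cdot\vc d_1)$, $(\vc d_2\cdot\vc d_2)'=2u_1(\vc t\cdot\vc d_2)$ and $(\vc d_1\cdot\vc d_2)'=-u_2(\vc t\cdot\vc d_2)+u_1(\vc t\cdot\vc d_1)$ all vanish, so the remaining inner products keep their initial values and the frame stays orthonormal.

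It remains to check \eqref{eq:curva_antman}. The second and third equations hold by construction, being the a.e. derivatives of \eqref{ds}. For the first, I would expand $\vc t'$ in the orthonormal basis just obtained: $\vc t'=(\vc t'\cdot\vc t)\vc t+(\vc t'\cdot\vc d_1)\vc d_1+(\vc t'\cdot\vc d_2)\vc d_2$, where $\vc t'\cdot\vc t=0$ since $|\vc t|\equiv 1$, while the other two coefficients are exactly $u_2$ and $-u_1$; this yields $\vc t'=u_2\vc d_1-u_1\vc d_2$. The main obstacle throughout is the low regularity: because $\vc t'$ is only $L^2$, the kernel $\vc t(r)\cdot\vc t'(s)$ of the Volterra equations is unbounded in $s$, so the elementary continuous-kernel fixed point argument does not apply directly; the reduction to a linear ODE with merely $L^1$-integrable coefficient is what circumvents this. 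Alternatively one could run a contraction argument for the Volterra operator on subintervals whose length is controlled by the absolute continuity of $s\mapsto\int_0^s|\vc t'|^2$, and patch the solutions together.
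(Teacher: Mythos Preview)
Your proof is correct. The orthonormality and RPAF verification arguments are essentially identical to the paper's. The one genuine difference lies in the existence/uniqueness step: the paper treats \eqref{eq:k1}--\eqref{eq:k2} directly as Volterra integral equations of the second kind with $L^2$ kernel and quotes the classical theory (Tricomi) for existence and uniqueness in $L^2$, whereas you first reformulate them as the linear Carath\'eodory ODE $\vc v'=-(\vc t'\cdot\vc v)\,\vc t$ and invoke \cite{hartman}. Your route is arguably more self-contained here, since the paper already uses Carath\'eodory theory for the system \eqref{eq:sistema_tot}--\eqref{eq:clamping}, and it sidesteps the unbounded-in-$s$ kernel issue you flagged without needing the $L^2$ Volterra machinery. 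The paper's route, on the other hand, is shorter: one citation handles existence and uniqueness of $u_1,u_2$ in one stroke, with no need to pass through the auxiliary fields and argue uniqueness back. Either way the remainder of the argument---defining $\vc d_i$ by \eqref{ds}, reading off $u_2=\vc t'\cdot\vc d_1$, $u_1=-\vc t'\cdot\vc d_2$, differentiating the Gram entries, and expanding $\vc t'$ in the basis---is the same in both proofs.
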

\begin{proof}
The integral equations \eqref{eq:k1} and \eqref{eq:k2} are Volterra integral equations of the second kind with kernel in $L^2$. Applying \cite[Sec.\,1.5]{T} we get existence and uniqueness of solutions $u_1,u_2\in L^2(0,L)$. Then, if we define $\vc d_1,\vc d_2$ by means of \eqref{ds} we get $\vc d_1'=-u_2\vc t$ and $\vc d_2'=u_1\vc t$. At this point, from \eqref{eq:k1} - \eqref{eq:k2}, we obtain
\[
u_1=-\vc d_2 \cdot \vc t', \quad u_2=\vc d_1 \cdot \vc t'.
\]
As a consequence
\[
(\vc t \cdot \vc d_1)'=\vc t' \cdot \vc d_1+\vc t \cdot \vc d_1'=u_2-u_2=0, \quad (\vc t \cdot \vc d_2)'=\vc t' \cdot \vc d_2+\vc t \cdot \vc d_2'=-u_1+u_1=0.
\]
This means that $\vc t \cdot \vc d_i=\vc t(0) \cdot \vc d_i^0=0$ for $i=1,2$.
Furthermore, we also easily obtain 
\[
(\vc d_1 \cdot \vc d_1)'=(\vc d_2 \cdot \vc d_2)'=(\vc d_1 \cdot \vc d_2)'=0.
\]
Hence, $\{\vc t,\vc d_1,\vc d_2\}$ is an orthonormal basis in $\R^3$. Finally, $\vc t'=u_2\vc d_1-u_1\vc d_2$ and this yields the conclusion.
\end{proof}


To define geometric invariants of $\vc x$, following Giusteri et al. \cite{GF}, we introduce 
\begin{equation}
\label{legame_ktau}
u: = u_2 + {\rm i} u_1
\end{equation}
where $u_1$ and $u_2$ are the solutions of \eqref{eq:k1} and \eqref{eq:k2} respectively, having fixed the initial data $\vc d_1^0$ and $\vc d_2^0$. In order to simplify the arguments, we denote by $\vartheta(u)$ the unique argument of $u$ in $[-\pi, \pi)$. In particular, we notice that
\[
u =|u| {\rm e}^{{\rm i} \vartheta(u)}.
\]

\begin{theorem}
\label{th:finale}
Let $\{\vc d_1^0,\vc d_2^0\}$ and $\{\widetilde{\vc d}_1^0,\widetilde{\vc d}_2^0\}$ be such that $\{\vc t(0),\vc d_1^0,\vc d_2^0\}$ and $\{\vc t(0),\widetilde{\vc d}_1^0,\widetilde{\vc d}_2^0\}$ are two orthonormal basis in $\R^3$. Let $(u_1,u_2)$ and $(\widetilde{u}_1, \widetilde{u}_2)$ be the respective solutions of \eqref{eq:k1} and \eqref{eq:k2}. Then, the following hold true
\begin{equation}
    \label{eq:abs}
    \abs{u} = \abs{\widetilde{u}}
\end{equation}
and
\begin{equation}
    \label{eq:theta}
    \vartheta(u)- \vartheta(\tilde{u}) = \alpha,
\end{equation}
where $\alpha$ is a constant independent of $s$.
\end{theorem}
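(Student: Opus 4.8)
The plan is to reduce both assertions to a single algebraic observation: for any admissible initial frame, the complex density $u$ can be written intrinsically through the tangent field alone, up to a unimodular factor that encodes the choice of $\{\vc d_1^0,\vc d_2^0\}$. Throughout I assume both initial frames are positively oriented, so that they are related by a genuine rotation and not a reflection; this is the natural normalization $\vc t=\vc d_1\times\vc d_2$ already implicit in \eqref{eq:curva_antman}.

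First I would record the identity that makes $u$ transparent. By Proposition \ref{th:integral} the frame satisfies \eqref{eq:curva_antman}, and the proof there gives $u_2=\vc t'\cdot\vc d_1$ and $u_1=-\vc t'\cdot\vc d_2$. Hence, extending the dot product complex-linearly in the second slot,
\[
u = u_2 + {\rm i}\,u_1 = (\vc t'\cdot\vc d_1) - {\rm i}\,(\vc t'\cdot\vc d_2) = \vc t'\cdot(\vc d_1 - {\rm i}\,\vc d_2).
\]
Since $|\vc t|\equiv 1$ forces $\vc t\cdot\vc t'=0$, the vector $\vc t'$ lies in the span of the orthonormal pair $\{\vc d_1,\vc d_2\}$, so $\abs{u}^2=u_1^2+u_2^2=\abs{\vc t'}^2$. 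The right-hand side depends only on $\vc x$ and not on the initial data, which yields $\abs{u}=\abs{\vc t'}=\abs{\widetilde u}$ at once, i.e. \eqref{eq:abs}.

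For \eqref{eq:theta} I would show that the two frames differ by one and the same constant rotation for every $s$. The initial couples $\{\vc d_1^0,\vc d_2^0\}$ and $\{\widetilde{\vc d}_1^0,\widetilde{\vc d}_2^0\}$ are two positively oriented orthonormal bases of the plane $\vc t(0)^\perp$, so there is a constant angle $\alpha$ with $\widetilde{\vc d}_1^0=\cos\alpha\,\vc d_1^0-\sin\alpha\,\vc d_2^0$ and $\widetilde{\vc d}_2^0=\sin\alpha\,\vc d_1^0+\cos\alpha\,\vc d_2^0$. I would then define the candidate frame $\vc e_1=\cos\alpha\,\vc d_1-\sin\alpha\,\vc d_2$, $\vc e_2=\sin\alpha\,\vc d_1+\cos\alpha\,\vc d_2$ with this \emph{same} constant $\alpha$ and, using \eqref{eq:curva_antman}, check that
\[
\vc e_1'=-(u_2\cos\alpha+u_1\sin\alpha)\,\vc t,\qquad \vc e_2'=(u_1\cos\alpha-u_2\sin\alpha)\,\vc t
\]
are both purely tangential. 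Thus $\{\vc t,\vc e_1,\vc e_2\}$ is a RPAF with initial data $\{\widetilde{\vc d}_1^0,\widetilde{\vc d}_2^0\}$, and by the uniqueness in Proposition \ref{th:integral} it coincides with $\{\vc t,\widetilde{\vc d}_1,\widetilde{\vc d}_2\}$ for all $s$.

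Finally I would feed this constant rotation back into the formula for $u$. Using $\widetilde u=\vc t'\cdot(\widetilde{\vc d}_1-{\rm i}\,\widetilde{\vc d}_2)$ together with $\cos\alpha-{\rm i}\sin\alpha={\rm e}^{-{\rm i}\alpha}$, the combination $\widetilde{\vc d}_1-{\rm i}\,\widetilde{\vc d}_2$ collapses to ${\rm e}^{-{\rm i}\alpha}(\vc d_1-{\rm i}\,\vc d_2)$, whence
\[
\widetilde u = {\rm e}^{-{\rm i}\alpha}\,u,
\]
so that $\vartheta(\widetilde u)=\vartheta(u)-\alpha$ modulo $2\pi$, which is \eqref{eq:theta} with the same constant $\alpha$. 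The step that requires care is precisely the rotation-is-constant claim: in the $W^{2,2}$ setting one cannot imitate the smooth Proposition \ref{prop:esistenzaRPAF} by differentiating an a priori only measurable angle $\alpha(s)$, so I sidestep the regularity issue by constructing the constant-angle frame explicitly and invoking the uniqueness of Proposition \ref{th:integral} to force it to agree with the tilded frame.
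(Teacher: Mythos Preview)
Your proof is correct and follows the same line as the paper's: both write $u=\vc t'\cdot(\vc d_1-{\rm i}\,\vc d_2)$, observe $\abs{u}^2=\abs{\vc t'}^2$ for \eqref{eq:abs}, and then use that the two RPAFs differ by a constant planar rotation to obtain $\widetilde u={\rm e}^{\pm{\rm i}\alpha}u$.

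The one genuine difference is how the constant-rotation step is justified. The paper simply invokes Proposition \ref{prop:esistenzaRPAF}, whose proof was written in the smooth setting and proceeds by differentiating an angle $\vartheta(s)$. You instead fix $\alpha$ from the initial data, build the rotated frame $\{\vc e_1,\vc e_2\}$ by hand, verify it is a RPAF, and then appeal to the uniqueness part of Proposition \ref{th:integral} to identify it with $\{\widetilde{\vc d}_1,\widetilde{\vc d}_2\}$. This sidesteps the regularity issue you flag (one cannot a priori differentiate a merely measurable $\alpha(s)$ in the $W^{2,2}$ framework) and makes the argument self-contained at the weak regularity level; the paper's route is shorter but tacitly relies on Proposition \ref{prop:esistenzaRPAF} carrying over to $W^{1,2}$ frames. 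Your explicit orientation assumption is also appropriate, since without it the transition matrix could be a reflection and the phase relation would fail.
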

\begin{proof}
  First of all, to show the first relation \eqref{eq:abs}, we notice, by \eqref{eq:curva_antman}$_1$, that
\[
\abs{u}^2 = u_1^2+u_2^2 = |\vc t'|^2 = \widetilde{u}_1^2 + \widetilde{u}_2^2 = \abs{\widetilde{u}}^2.
\]
Next, to verify \eqref{eq:theta}, we observe directly by Proposition \ref{th:integral} that
\[
u_1 = -\vc t' \cdot \vc d_2, \quad u_2 = \vc t' \cdot \vc d_1,
\]
from which we get 
\[
u = \vc t'(s) \cdot \left(\vc d_1 - {\rm i} \,\vc d_2\right).
\]
Moreover, from Proposition \ref{prop:esistenzaRPAF}, we can write the rotated frame $(\widetilde{\vc d_1}, \widetilde{\vc d_2})$ in the plane perpendicular to $\vc t$ as 
$$
\left\{
\begin{aligned}
    &\widetilde{\vc d_1} = A \vc d_1 + B \vc d_2\\
    &\widetilde{\vc d_2} = \Gamma \vc d_1 + \Delta \vc d_2
\end{aligned}
\right.
$$
where $A, B, \Gamma, \Delta$ are constants. Then, it can be easily seen that
\[
    \tilde{u} = \tilde{u}_2+ {\rm i} \tilde{u}_1 = \vc t' \cdot \left(A \vc d_1 + B \vc d_2(s) - {\rm i} \Gamma \vc d_1 - {\rm i} \Delta \vc d_2\right)= \left(A u_2 - B u_1\right) + {\rm i} \left(\Delta u_1 - \Gamma u_2\right).
\]
Since the matrix $$
\begin{pmatrix}
    A&B\\
    \Gamma &\Delta
\end{pmatrix}
$$
is a constant rotation, it follows that $\tilde{u} = u {\rm e}^{{\rm i} \alpha}$ where the angle $\alpha$ depends only on $A, B, \Gamma, \Delta$, and this yields the conclusion.
\end{proof}


\section{Conclusions and remarks}
\label{sec:conclusion}
In this work, we compared the Serret-Frenet approach with the RPAF one.

In the first, starting from a moving frame $\{\vc t,\vc n,\vc b\} \in (W^{1,2}((0,L);\R^3))^3$ such that $\vc t' \cdot \vc b=0$, we can define the curvature and the torsion of $\vc x$ as follows
\[
\begin{aligned}
    &\kappa = \vc t' \cdot \vc n, &&&\tau = \vc n' \cdot \vc b.
\end{aligned}
\]
We stress the fact that $\kappa$ and $\tau$ are always defined in a weak sense and they are $L^2$ functions.  However, in general, it is not true that starting from $\vc t \in W^{1,2}((0,L);\R^3)$ with $\abs{\vc t} = 1$, there exists a moving frame $\{\vc t, \vc n, \vc b\} \in (W^{1,2}((0,L);\R^3))^3$ satisfying
\begin{equation}
\label{eq:SF_final}
  \left\{
\begin{aligned}
&\vc x'=\vc t,\\
&\vc{t}' = \kappa \vc{n},\\
&\vc{n}' = -\kappa \vc{t} + \tau \vc{b},\\
&\vc{b}' = -\tau \vc{n}.
\end{aligned}
\right. 
\end{equation}
Indeed, to have $\vc n \in W^{1,2}((0,L);\R^3)$, we must require, whenever $\abs{\vc t'} \neq 0$
\begin{equation}
\label{eq:regolarita}
    \frac{\vc t'}{\abs{\vc t'}} \in W^{1,2}((0,L);\R^3).
\end{equation}
The condition \eqref{eq:regolarita} could not be true without further assumptions on $\vc t$: observe that in general we cannot say more than $\vc t' \in L^2([0,L];\R^3)$. 

For a RPAF system, we immediately notice that the moving frame  $\{\vc t,\vc n,\vc b\}$ is not a RPAF since $\vc n'$ and $\vc b'$ are not parallel to $\vc t$. By means of Remark \ref{existence_relatively_parallel}, on a curve a RPAF generated by $\vc t \in W^{1,2}((0,L);\R^3)$ always exists. In this sense, the RPAF approach is more general since we can deal with any curve $\vc x \in W^{2,2}((0,L);\R^3)$.
Nevertheless, 
by Theorem \ref{th:finale}, the curvature and the torsion are defined as (see \eqref{eq:abs} and \eqref{eq:theta})
\[
\kappa = \sqrt{u_1^2+u_2^2} \qquad \text{and} \qquad \tau = \vartheta',
\]
where $u_2+ {\rm i}u_1 = \kappa {\rm e}^{{\rm i}\vartheta}$ and $u_1, u_2$ are the solutions of \eqref{eq:k1} and \eqref{eq:k2}. The main drawback of this approach stems in $\tau$ which is defined only in the sense of distributions: choosing discontinuous coefficients $u_1$ and $u_2$, we indeed get the angle $\vartheta$ to be a discontinuous function. We remark that this fact cannot happen for a frame of type \eqref{eq:SF_final}, where $\tau \in L^2((0,L))$.

To study variational problems for elastic curves related to functionals of type
\[
    \mathcal{F}[\vc x] = \int_{\vc x} f(\kappa, \tau)\, d \ell,
\]
one needs to introduce a weak notion of curvature and torsion. For instance, in \cite{BLMPRSA, BBLM}, we used the framed curve approach. Precisely, we considered functionals of the following type 
\[
\mathcal{F}[\vc t|\vc n|\vc b] = \int_0^L f(\vc t'\cdot \vc n, \vc n'\cdot \vc b)\, ds,
\]
where the independent variable is the moving frame $\{\vc t, \vc n, \vc b\}$. On the other hand, the formulation of the functional $\mathcal{F}$ in terms of RPAF is harder since the torsion is defined only in the sense of distributions. 

As a conclusion, it seems that the RPAF's approach is not suitable for performing a variational analysis of a functional depending on curvature and torsion. Nevertheless, it could be a useful approach to study geometric properties of curves since curvature and torsion turn out to be well-defined in a weaker framework.

\section*{Acknowledgements}
The authors thank Marco Degiovanni and Giulio Giusteri and for helpful suggestions and fruitful discussions.

GB is supported by the European Research Council (ERC), under the European Union's Horizon 2020 research and innovation programme, through the project ERC VAREG - {\em Variational approach to the regularity of the free boundaries} (grant agreement No. 853404). GB and LL are supported by italian Gruppo Nazionale per l'Analisi Matematica, la Probabilità e le loro Applicazioni (GNAMPA) of Istituto Nazionale per l'Alta Matematica (INdAM). AM is supported by italian Gruppo Nazionale per la Fisica Matematica (GNFM) of Istituto Nazionale per l'Alta Matematica (INdAM).

\bibliographystyle{abbrv}
\bibliography{references}

\end{document}